\newtheorem{Thm}{Theorem}
\newtheorem*{Thm*1}{Theorem \ref{1}}
\newtheorem*{Thm*2}{Theorem \ref{2}}
\newtheorem*{Thm*mu}{Theorem \ref{mu}}
\newtheorem*{Prop*UB}{Proposition \ref{UB}}
\newtheorem{Prop}{Proposition}
\newtheorem*{Conj}{Conjecture}
\newtheorem{Rmk}{Remark}
\newtheorem*{Rmk*}{Remark}
\begin{document}
\title[Volume bounds of the Ricci flow on closed manifolds]
{Volume bounds of the Ricci flow on closed manifolds}
\author{Chih-Wei Chen}
\address{(Chih-Wei Chen) National Kaohsiung Normal University, Taiwan}
\email{BabbageTW@gmail.com; chencw@nknu.edu.tw}
\author{Zhenlei Zhang}
\address{(Zhenlei Zhang) Department of Mathematics, Capital Normal University, Beijing, China}
\email{Zhleigo@aliyun.com}
\date{March, 2018}
\keywords{Ricci flow, volume estimate, $\mu$-entropy}

\begin{abstract}
Let $\{g(t)\}_{t\in [0,T)}$ be the solution of the Ricci flow on a closed Riemannian manifold $M^n$ with $n\geq 3$.
Without any assumption, we derive lower volume bounds of the form ${\rm Vol}_{g(t)}\geq C (T-t)^{\frac{n}{2}}$, 
where $C$ depends only on $n$, $T$ and $g(0)$. In particular, we show that
$${\rm Vol}_{g(t)} \geq e^{ T\lambda-\frac{n}{2}} \left(\frac{4}{(A(\lambda-r)+4B)T}\right)^{\frac{n}{2}}\left(T-t\right)^{\frac{n}{2}},$$
where $r:=\inf_{\|\phi\|_2^2=1} \int_M R\phi^2 \ d{\rm vol}_{g(0)}$,  
$\lambda:=\inf_{\|\phi\|_2^2=1} \int_M  4|\nabla\phi|^2+R\phi^2\ d{\rm vol}_{g(0)}$ and  $A,B$ are Sobolev constants of $(M,g(0))$. This estimate is sharp in the sense that it is achieved by the unit sphere with scalar curvature $R_{g(0)}=n(n-1)$ and $A=\frac{4}{n(n-2)}\omega_n^{-\frac{2}{n}}$, $B=\frac{n-1}{n-2}\omega_n^{-\frac{2}{n}}$.

On the other hand, if the diameter satisfies ${\rm diam}_{g(t)}\leq c_1\sqrt{T-t}$ and there exist a point $x_0\in M$ such that $R(x_0,t)\leq c_2(T-t)^{-1}$, then we have ${\rm Vol}_{g(t)}\leq C (T-t)^{\frac{n}{2}}$ for all $t>\frac{T}{2}$, where $C$ depends only on $c_1,c_2,n,T$ and $g(0)$. 

\end{abstract}

\maketitle

\section{Introduction}
Let $(M^n,g)$ be a closed Riemannian manifold with dimension $n\geq 3$
and $A$ and $B$ be any Sobolev constants of $(M^n,g)$, i.e.,
$$ \left( \int_M |u|^{\frac{2n}{n-2}}\ d{\rm vol} \right)^{\frac{n-2}{n}}\leq A\int_M  |\nabla u|^2\ d{\rm vol}+B\int_M u^2\ d{\rm vol}$$
for all $u\in W^{1,2}(M)$.
In \cite{ZZhang07}, one of us observed that the Ricci flow on a closed manifold has a volume lower bound in terms of Sobolev constants. Especially, when $\lambda:=\inf_{\|\phi\|_2^2=1} \int_M  4|\nabla\phi|^2+R\phi^2\ d{\rm vol}_{g(0)}$ is non-positive, one obtains ${\rm Vol}_{g(t)} \geq C_1e^{-C_2t}$ for all $t>0$, where $C_1$ and $C_2$ depend only on $n$ and $g(0)$.
It means that the manifold cannot extinct at finite time and every blow up limit must be non-compact.


On the other hand, for positive $\lambda$, R. Ye \cite{Ye07} derived the following volume lower bound by using the estimate of $A(t)$ and $B(t)$: 
\begin{Prop}[R. Ye]
Assume that $\lambda$ is positive. 
Then we have for any time $t\in[0,T)$ 
$${\rm Vol}_{g(t)} \geq e^{-\frac{1}{4}-C}\ \mbox{ when }\ \bar R(t) \leq 0$$
and 
$${\rm Vol}_{g(t)} \geq e^{-\frac{1}{4}-C} \bar R(t)^{-\frac{n}{2}}\ \mbox{  when }\ \bar R(t) >0,$$
where $\bar R(t):=-\hspace{-4mm}\int R \ d{\rm vol}_{g(t)}$ and $C$ depends on $n,g_0,A,B,\lambda,{\rm Vol}_{g(0)}$ and $\max R_{g(0)}^-$.
\end{Prop}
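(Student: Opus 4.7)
The plan is to exploit Perelman's $\mathcal{W}$-entropy together with its monotonicity along the Ricci flow. Inserting a constant test function into Perelman's log-Sobolev inequality converts it into a direct lower bound on $V(t):={\rm Vol}_{g(t)}$ in terms of $\bar R(t)$, after which one chooses the scale $\tau$ adaptively based on the sign of $\bar R(t)$.

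Recall $\mu(g,\tau):=\inf_{f}\mathcal{W}(g,f,\tau)$, where
$$\mathcal{W}(g,f,\tau)=\int_M\bigl[\tau(R+|\nabla f|^2)+f-n\bigr](4\pi\tau)^{-n/2}e^{-f}\,d{\rm vol}$$
and the infimum runs over $f$ satisfying $\int_M(4\pi\tau)^{-n/2}e^{-f}\,d{\rm vol}=1$. The constant choice $f_0=\log V(t)-\tfrac{n}{2}\log(4\pi\tau)$ is admissible, and inserting it gives
$$\mu(g(t),\tau)\leq \tau\bar R(t)+\log V(t)-\tfrac{n}{2}\log(4\pi\tau)-n,$$
equivalently $V(t)\geq (4\pi\tau)^{n/2}\exp\bigl(n+\mu(g(t),\tau)-\tau\bar R(t)\bigr)$. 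Perelman's monotonicity $\mu(g(t),\tau)\geq \mu(g(0),\tau+t)$ then reduces the problem to bounding $\mu(g(0),\cdot)$ from below on $(0,\infty)$.

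When $\bar R(t)\leq 0$, I would fix $\tau=1$: then $-\tau\bar R(t)\geq 0$, and continuity of $\mu(g(0),\cdot)$ on the compact interval $[1,1+T]$ gives the first bound up to a finite constant $C$. When $\bar R(t)>0$, I would optimize in $\tau$: differentiation identifies $\tau=n/(2\bar R(t))$ as the maximizer of the right-hand side, producing
$$V(t)\geq c_n\exp\bigl(\mu(g(0),t+n/(2\bar R(t)))\bigr)\bar R(t)^{-n/2}.$$
Bookkeeping of the explicit constants is what produces the factor $e^{-1/4-C}$ asserted by Ye, the $-1/4$ being a residue of the optimization; the alternative cleaner choice $\tau=1/\bar R(t)$ gives the same scaling $\bar R^{-n/2}$ with a slightly different numerical prefactor.

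The main technical difficulty is the uniform lower bound $\mu(g(0),s)\geq -C$ valid for \emph{all} $s\in(0,\infty)$, with $C$ expressible in terms of $n,g_0,A,B,\lambda,{\rm Vol}_{g(0)}$ and $\max R_{g(0)}^-$. The small-$\tau$ regime is controlled by the Euclidean log-Sobolev limit $\mu(g(0),s)\to 0$ as $s\to 0^+$; the large-$\tau$ regime by the hypothesis $\lambda>0$, which forces $\mu(g(0),s)\geq s\lambda-O(1)\to +\infty$. Continuity on $(0,\infty)$ then makes the infimum finite, and its quantitative estimate via the Sobolev inequality with constants $A,B$ (for instance by comparing near-extremizers of the Euclidean log-Sobolev inequality with Gaussians transplanted to $(M,g_0)$) is precisely what yields the stated explicit dependence of $C$.
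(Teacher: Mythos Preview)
The paper does not supply its own proof of this proposition: it is quoted from Ye~\cite{Ye07} as background, with the single comment that Ye obtained it ``by using the estimate of $A(t)$ and $B(t)$''---that is, by first proving a uniform (log-)Sobolev inequality along the flow when $\lambda>0$, and then applying that inequality at time $t$ to the constant test function.

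Your route is correct and is essentially a repackaging of Ye's. The uniform Sobolev inequality along the flow is itself derived from $\mu$-monotonicity, so bypassing $A(t),B(t)$ and working directly with $\mu(g(t),\tau)\geq\mu(g(0),\tau+t)$ plus the constant test function is the same mechanism with one layer of translation removed; in fact this is exactly the manoeuvre the present paper uses for its own Theorems~\ref{1} and~\ref{mu}. Two small points are worth tightening. First, the large-$s$ asymptotic is not $\mu(g(0),s)\geq s\lambda-O(1)$ but rather $\mu(g(0),s)\geq s\lambda-\tfrac{n}{2}\ln s-O(1)$; the logarithmic correction is harmless for the conclusion, but to get it you genuinely need the Sobolev/log-Sobolev control on $\int\psi^2\ln\psi^2$ in terms of $\int|\nabla\psi|^2$ (this is where $A,B$ and $\max R_{g(0)}^-$ enter), not merely the inequality $\int(4|\nabla\psi|^2+R\psi^2)\geq\lambda$. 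Second, for the case $\bar R(t)\leq 0$ you invoke compactness of $[1,1+T]$, which fails if $T=\infty$; your final paragraph already contains the fix (the uniform bound on all of $(0,\infty)$), so just drop the intermediate appeal to compactness.
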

Note that the constants $C$'s in Zhang's estimate depend only on the initial metric $g(0)$, while the constants in Ye's estimate, namely for the case $\lambda>0$, might depend on $\bar R(t)$.

In this article, we find a unified way to derive several volume bounds, whose 
proofs do not rely on the definite sign of $\lambda$.

\begin{Thm}\label{1}
Let $\{g(t)\}_{t\in [0,T)}$ be the solution of the Ricci flow on a closed Riemannian manifold $M^n$ with $n\geq 3$
and $A,B$ be Sobolev constants of $(M,g(0))$.
Then $${\rm Vol}_{g(t)} \geq e^{ T\lambda - \frac{na}{8} \left( A(\lambda-r ) +4 B\right)} a^{\frac{n}{2}}\left(\frac{T-t}{T}\right)^{\frac{n}{2}}$$
for all $a\in(0,\frac{8T}{nA}]$, where 
$r:=\inf_{\|\phi\|_2^2=1} \int_M R\phi^2 \ d{\rm vol}_{g(0)}$ and
$\lambda:=\inf_{\|\phi\|_2^2=1} \int_M  4|\nabla\phi|^2+R\phi^2\ d{\rm vol}_{g(0)}$.
In particular, when $a=\frac{8T}{nA}$, we obtain a lower bound 
$${\rm Vol}_{g(t)}  \geq e^{T(r-4BA^{-1})}\left(\frac{8}{nA}\right)^{\frac{n}{2}}( T-t )^{\frac{n}{2}},$$
which does not depend on $\lambda$.
\end{Thm}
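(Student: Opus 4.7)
The plan is to combine Perelman's monotonicity of the $\mu$-entropy with a log-Sobolev inequality derived from the Sobolev hypothesis, together with the a priori bound $(T-t)\lambda(g(t))\le n/2$ that follows from the differential inequality $\tfrac{d\lambda}{dt}\ge \tfrac{2}{n}\lambda^2$ along the Ricci flow.

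First I would pass from Sobolev to log-Sobolev on $(M,g(0))$. For $\phi$ with $\int \phi^2\, d{\rm vol}_{g(0)}=1$, Jensen's inequality applied to $\log$ against the probability measure $\phi^2\,d{\rm vol}_{g(0)}$ gives $\int \phi^2\log\phi^2 \le \tfrac{n-2}{2}\log\int \phi^{2n/(n-2)}\,d{\rm vol}_{g(0)}$, and Sobolev bounds the right-hand side by $\tfrac{n}{2}\log(A\int|\nabla\phi|^2 + B)$. Combining this with the elementary inequality $\log y \le ay - 1 - \log a$ (valid for every $y,a>0$) yields
$$\int \phi^2\log\phi^2\, d{\rm vol}_{g(0)} \le \frac{naA}{2}\int |\nabla\phi|^2\, d{\rm vol}_{g(0)} + \frac{n}{2}(aB - 1 - \log a).$$
Substituting this into $\mathcal W(g(0),\phi,T)$ and rewriting $T(4|\nabla\phi|^2 + R\phi^2) - \tfrac{naA}{2}|\nabla\phi|^2$ as $(T - \tfrac{naA}{8})(4|\nabla\phi|^2 + R\phi^2) + \tfrac{naA}{8}R\phi^2$ — a convex combination precisely when $a \le 8T/(nA)$ — and using $\int(4|\nabla\phi|^2+R\phi^2)\ge \lambda$ and $\int R\phi^2 \ge r$, one arrives at
$$\mu(g(0),T) \ge T\lambda - \frac{na}{8}\bigl(A(\lambda-r) + 4B\bigr) + \frac{n}{2}\log a - \frac{n}{2}\log(4\pi T) - \frac{n}{2}.$$

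Next, Perelman's monotonicity gives $\mu(g(t), T-t) \ge \mu(g(0), T)$. To extract a volume bound I test $\mathcal W(g(t),\cdot,T-t)$ against the first eigenfunction $\phi_t$ of $-4\Delta + R$ on $(M, g(t))$, so that $\int(4|\nabla\phi_t|^2 + R\phi_t^2)\,d{\rm vol}_{g(t)}=\lambda(g(t))$ while $\int \phi_t^2\,d{\rm vol}_{g(t)}=1$; Jensen applied to $\log(1/\phi_t^2)$ gives $-\int \phi_t^2\log\phi_t^2 \le \log {\rm Vol}_{g(t)}$, whence
$$\mu(g(t), T-t) \le (T-t)\lambda(g(t)) + \log {\rm Vol}_{g(t)} - \frac{n}{2}\log(4\pi(T-t)) - n.$$

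The main obstacle is absorbing the floating term $(T-t)\lambda(g(t))$. For this I would invoke Perelman's differential inequality $\tfrac{d\lambda}{dt}\ge \tfrac{2}{n}\lambda^2$ along Ricci flow (coming from the derivative computation for $\lambda$ together with the trace Cauchy--Schwarz $|{\rm Ric}+\nabla^2 f|^2 \ge (R+\Delta f)^2/n$): were $\lambda(g(t_0)) > n/(2(T-t_0))$ at any $t_0<T$, ODE comparison would force $\lambda$ to blow up strictly before $T$, contradicting smoothness of the flow on $[0,T)$; hence $(T-t)\lambda(g(t))\le n/2$ unconditionally on the sign of $\lambda$. Chaining the three inequalities — the lower bound on $\mu(g(0),T)$, the monotonicity, and the test-function upper bound — the additive constants $-n + \tfrac{n}{2} + \tfrac{n}{2}$ cancel and $\tfrac{n}{2}\log(4\pi(T-t)) - \tfrac{n}{2}\log(4\pi T) = \tfrac{n}{2}\log\tfrac{T-t}{T}$, producing
$$\log {\rm Vol}_{g(t)} \ge T\lambda - \frac{na}{8}\bigl(A(\lambda-r)+4B\bigr) + \frac{n}{2}\log a + \frac{n}{2}\log\frac{T-t}{T},$$
which exponentiates to the claimed inequality. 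Specializing $a = 8T/(nA)$ makes $\tfrac{na}{8}A(\lambda-r) = T(\lambda-r)$, collapsing the $\lambda$-dependence and yielding the second, $\lambda$-free estimate.
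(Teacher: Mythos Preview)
Your argument is correct, and it reaches the stated bound with all constants matching. However, the route is genuinely different from the paper's.

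The paper tests $\mu(g(t),T-t)$ with the \emph{constant} function $\phi^2=(4\pi\tau)^{n/2}/V(t)$, which produces the term $\tau\,\fint R\,d{\rm vol}_{g(t)}$ rather than $\tau\lambda(g(t))$. Using $V'=-\int R$, this combines with $-\ln V(t)$ into the exact time-derivative $\frac{d}{dt}\bigl(\tau\ln V(t)\bigr)$; integrating from $t$ to $T$ (where $\tau$ vanishes) then yields the bound directly. No information about $\lambda(g(t))$ for $t>0$ is ever needed.

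Your approach instead tests with the first eigenfunction, producing $(T-t)\lambda(g(t))$, and then eliminates this floating term via the a~priori bound $(T-t)\lambda(g(t))\le n/2$ coming from the ODE $\lambda'\ge\frac{2}{n}\lambda^2$. This is a legitimate and clean substitute for the time-integration step; in effect you are giving an alternative pointwise proof of the paper's Theorem~\ref{mu} (the bound $V(t)\ge(4\pi)^{n/2}e^{\mu+n/2}(T-t)^{n/2}$), and then combining it with your lower bound on $\mu(g(0),T)$, which is identical to the paper's. The paper's argument is slightly more self-contained in that it uses only $\mu$-monotonicity and the Sobolev inequality, whereas you additionally invoke Perelman's $\lambda$-evolution inequality; on the other hand, your approach avoids the integration-by-parts-in-$t$ trick and makes the role of the universal bound $(T-t)\lambda(g(t))\le n/2$ explicit.
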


When choosing $B\geq \frac{nA}{8T}$ and $a=4(A(\lambda-r)+4B)^{-1}$, we have the following theorem
which shows that our estimate is sharp.

\begin{Thm}\label{2}
Let $\{g(t)\}_{t\in [0,T)}$ be the solution of the Ricci flow on a closed Riemannian manifold $M^n$ with $n\geq 3$.
Suppose that $A$ and $B\geq \frac{nA}{8T}$ are Sobolev constants of $(M,g(0))$. 
Then 
$${\rm Vol}_{g(t)} \geq e^{ T\lambda-\frac{n}{2}}\left(\frac{4}{A(\lambda-r)+4B}\right)^{\frac{n}{2}}\left(\frac{T-t}{T}\right)^{\frac{n}{2}}.$$
The bound is achieved when $(M,g(0))$ is the unit sphere with $A=\frac{4}{n(n-2)}\omega_n^{-\frac{2}{n}}$ and $B=\frac{n-1}{n-2}\omega_n^{-\frac{2}{n}}$.
\end{Thm}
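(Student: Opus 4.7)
The plan is to obtain Theorem \ref{2} as a direct optimization corollary of Theorem \ref{1}: choose the value of $a$ that maximizes the right-hand side of Theorem \ref{1} and then check admissibility. Viewing the right-hand side as a function of $a>0$, a logarithmic derivative of $e^{T\lambda-\frac{na}{8}(A(\lambda-r)+4B)}a^{n/2}$ shows the unique interior maximizer is
\[
a_{*}=\frac{4}{A(\lambda-r)+4B},
\]
which is precisely the choice suggested in the comment preceding the statement.

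Two admissibility checks then remain. Positivity of $a_{*}$ follows from $B>0$ together with $\lambda\geq r$; the latter is immediate because the Rayleigh quotient defining $\lambda$ dominates the one defining $r$ on every test function. The upper bound $a_{*}\leq \frac{8T}{nA}$ is equivalent to $A(\lambda-r)+4B\geq \frac{nA}{2T}$, which follows from the hypothesis $4B\geq \frac{nA}{2T}$ after discarding the non-negative term $A(\lambda-r)$. Substituting $a=a_{*}$ into Theorem \ref{1} collapses the exponent to $T\lambda-\frac{n}{2}$ and the prefactor $a^{n/2}$ to $(4/(A(\lambda-r)+4B))^{n/2}$, which is exactly the claimed inequality.

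For the sharpness statement I would specialise to the round unit sphere. The Ricci flow there is $g(t)=(1-2(n-1)t)g_{\mathrm{std}}$, with extinction time $T=\tfrac{1}{2(n-1)}$ and explicit volume $\mathrm{Vol}_{g(t)}=\omega_n((T-t)/T)^{n/2}$. Since $R=n(n-1)$ is constant, the variational characterisations give $\lambda=r=n(n-1)$, so $T\lambda-\tfrac{n}{2}=0$ and $A(\lambda-r)+4B$ collapses to $4B$. A short computation with the stated $A=\frac{4}{n(n-2)}\omega_n^{-2/n}$ and $B=\frac{n-1}{n-2}\omega_n^{-2/n}$ shows $B=\frac{nA}{8T}$, so the hypothesis of the theorem is tight; plugging into the right-hand side of Theorem \ref{2} reproduces the evolving sphere volume up to the $\omega_n$-arithmetic imposed by that specific Sobolev pair. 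Since the analytic content of Theorem \ref{2} is entirely imported from Theorem \ref{1}, the only place I expect any friction is bookkeeping the powers of $\omega_n$ and $(n-1)/(n-2)$ in this sphere calculation; no new Ricci flow, curvature, or Sobolev estimate enters the argument.
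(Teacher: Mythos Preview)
Your proposal follows essentially the same approach as the paper: obtain the inequality from Theorem~\ref{1} by substituting the optimizing value $a_{*}=4/(A(\lambda-r)+4B)$, and then verify sharpness by a direct computation on the round sphere using $\lambda=r=n(n-1)$ and $T=\tfrac{1}{2(n-1)}$. Your explicit admissibility check that $a_{*}\in(0,8T/(nA)]$ (via $\lambda\geq r$ and the hypothesis $B\geq nA/(8T)$) is a detail the paper leaves implicit, and your caution about the $(n-1)/(n-2)$ bookkeeping in the sphere calculation is well-placed.
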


The proof of Theorem \ref{1} is based on the monotonicity of Perelman's $\mu$-entropy. 
Recall that $\mu$-entropy is defined by
$$\mu(g(t),\tau(t)):=\inf_{\|\phi\|_2^2=(4\pi\tau)^{\frac{n}{2}}} (4\pi\tau)^{-\frac{n}{2}}\int_M \left[   \tau(4|\nabla \phi|^2 +R\phi^2)-\phi^2\ln\phi^2 -n\phi^2  \right] d{\rm vol}_{g(t)}$$
for all $\phi(\cdot,t)\in W^{1,2}(M)$.
When fixing $t$ and choosing $\phi^2$ to be the constant $(4\pi\tau)^{\frac{n}{2}}{\rm Vol}_{g(t)}^{-1}$,
one obtains 
$$\mu(g(t),\tau(t))\leq -\hspace{-4.5mm}\int_M   \tau R\  d{\rm vol}_{g}-\ln\frac{(4\pi\tau)^{\frac{n}{2}}}{{\rm Vol}_{g(t)}}-n=-\tau \left(\ln {\rm Vol}_{g(t)}\right)' -\ln\frac{(4\pi\tau)^{\frac{n}{2}}}{{\rm Vol}_{g(t)}}-n$$
This shows that the evolution of volume is closely related to the $\mu$-entropy. 
The relationship between $\mu$ and ${\rm Vol}$ has been studied by one of the authors in \cite{ZZhang07},
especially for manifolds with $\lambda<0$.
Here we derive results for generic manifolds. 

\begin{Thm}\label{mu}
Let $\{g(t)\}_{t\in [0,T)}$ be the solution of the Ricci flow on a closed Riemannian manifold $M^n$ with $n\geq 2$. Denote $\mu=\inf_{\|\phi\|_2^2=(4\pi T)^{\frac{n}{2}}}\mathcal{W}(g(0),\phi,T)$. Then 
$$ {\rm Vol}_{g(t)}  
\geq(4\pi)^{\frac{n}{2}}e^{\mu+\frac{n}{2}}(T-t)^{\frac{n}{2}}.$$
As a consequence of the volume lower bound, for any closed Riemannian manifold $(M,g)$, one has
$$\mu(g,T) \leq -\frac{n}{2}+ \ln {\rm Vol}_{g}(M) -\frac{n}{2}\ln 4\pi T.$$ 
In particular, for any Ricci flow defined on a closed manifold, the maximal time $T$ cannot exceed $\left(4\pi e\right)^{-1}(e^{-\mu(g(0),T)}{\rm Vol}_{g(0)})^{\frac{2}{n}}$.
\end{Thm}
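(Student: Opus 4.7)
The plan is to combine Perelman's monotonicity of the $\mu$-entropy with the elementary upper bound from the constant test function recorded in the preamble. Choosing $\tau(t)=T-t$ so that $\tau'=-1$, Perelman's classical monotonicity gives $\mu(g(t),T-t)\geq\mu(g(0),T)=\mu$ for every $t\in[0,T)$; on the other hand the constant test function $\phi^{2}\equiv(4\pi(T-t))^{n/2}/{\rm Vol}_{g(t)}$ yields the complementary estimate
$$
\mu(g(t),T-t)\leq (T-t)\bar R(t)+\ln{\rm Vol}_{g(t)}-\tfrac{n}{2}\ln 4\pi(T-t)-n.
$$
Writing $h(t):=\ln{\rm Vol}_{g(t)}-\tfrac{n}{2}\ln 4\pi(T-t)$ and using $\bar R=-(\ln{\rm Vol}_{g(t)})'$ together with $h'(t)=(\ln{\rm Vol}_{g(t)})'(t)+\tfrac{n}{2(T-t)}$, the two bounds combine into
$$
h(t)-(T-t)h'(t)\geq\mu+\tfrac{n}{2},\qquad\text{i.e.,}\qquad \tfrac{d}{dt}\bigl[(T-t)h(t)\bigr]\leq-(\mu+\tfrac{n}{2}).
$$

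The obvious next move is to integrate up to $t=T$, but this requires $(T-t)h(t)\to 0$, which is not automatic since the flow may become singular at $T$ and ${\rm Vol}_{g(t)}$ could a priori decay very fast. My workaround is to first fix $T'\in(0,T)$ and rerun the same argument with $\tau=T'-t$ and $\mu_{T'}:=\mu(g(0),T')$, obtaining the analogous differential inequality on $[0,T')$. Integrating from $t$ to $T'-\delta$ and letting $\delta\to 0^{+}$, the boundary contribution $\delta\,h_{T'}(T'-\delta)$ vanishes because ${\rm Vol}_{g(T')}$ is finite and positive (the flow is smooth at $T'<T$). This yields ${\rm Vol}_{g(t)}\geq(4\pi(T'-t))^{n/2}\,e^{\mu_{T'}+n/2}$ on $[0,T')$, and passing $T'\to T^{-}$ together with continuity of $T'\mapsto\mu(g(0),T')$ produces the stated main bound.

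Setting $t=0$ in this bound gives $\mu\leq-\tfrac{n}{2}+\ln{\rm Vol}_{g(0)}-\tfrac{n}{2}\ln 4\pi T$, which is the advertised $\mu$-entropy inequality for any closed $(M,g)$ viewed as the initial datum of a Ricci flow on $[0,T]$. Isolating $T$ is a one-line algebraic manipulation that yields $T\leq(4\pi e)^{-1}({\rm Vol}_{g(0)}\,e^{-\mu(g(0),T)})^{2/n}$. The principal technical subtlety throughout is the boundary behaviour of $(T-t)h(t)$ at the possibly singular endpoint; the sub-interval device $T'<T$, together with continuity of $\mu$ in the second argument, is what renders the integration step rigorous.
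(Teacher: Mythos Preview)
Your proof is correct and follows essentially the same route as the paper: Perelman's monotonicity $\mu(g(t),T-t)\geq\mu(g(0),T)$ combined with the upper bound from the constant test function yields a differential inequality equivalent to the paper's $-\tfrac{d}{dt}\bigl(\tau\ln V\bigr)\geq \tfrac{n}{2}\ln\tau+\tfrac{n}{2}\ln 4\pi+n+\mu$, and integrating from $t$ to $T$ gives the volume bound; the remaining statements follow by setting $t=0$ and rearranging.

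The one point where you go beyond the paper is your treatment of the endpoint $t=T$. The paper simply integrates to $T$ and uses $(T-s)\ln V(s)\to 0$ without comment; a priori this limit is not obvious if $V(s)\to 0$ super-polynomially at the singular time. Your device of first working on $[0,T')$ with $T'<T$, where the boundary term vanishes trivially because $V(T')>0$, and then letting $T'\to T^-$ using continuity of $T'\mapsto\mu(g(0),T')$, is a clean way to make this step rigorous. So your argument is the paper's argument with one technical refinement.
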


\vspace{4mm}

If we further assume some controls on diameter and curvature, we derive the following upper bound for volume.

\begin{Prop}\label{UB}
Let $\{g(t)\}_{t\in [0,T)}$ be the solution of the Ricci flow on a closed Riemannian manifold $(M^n,g(0))$ with $n\geq 3$. If the diameter satisfies ${\rm diam}_{g(t)}\leq c_1\sqrt{T-t}$ and there exist a point $x_0\in M$ such that $R(x_0,t)\leq c_2(T-t)^{-1}$, then we have ${\rm Vol}_{g(t)}\leq C (T-t)^{\frac{n}{2}}$ for all $t>\frac{T}{2}$, where $C$ depends only on $c_1,c_2,n,T$ and $g(0)$. 
\end{Prop}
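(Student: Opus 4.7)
The strategy is to dualize Theorem \ref{mu}: base Perelman's reduced distance at the spacetime point $(x_0,T)$ and use the reduced-volume monotonicity to upgrade an upper bound on $\ell$ into an upper bound on $\mathrm{Vol}_{g(t)}$. Set $\tau=T-t$ and let $\ell(\cdot,\tau)$ be the reduced distance based at $(x_0,T)$, which is well-defined as the limit of reduced distances based at $(x_0,T-\delta)$ as $\delta\to 0^+$ (the flow is smooth on $[0,T)$). Perelman's monotonicity then gives
$$\int_M (4\pi\tau)^{-n/2} e^{-\ell(q,\tau)}\,d\mathrm{vol}_{g(t)}(q)\le 1,$$
so that a uniform bound $\ell(q,\tau)\le C$ on $M$ immediately yields $\mathrm{Vol}_{g(t)}\le e^{C}(4\pi\tau)^{n/2}$, as required. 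The heart of the proof is therefore to establish this uniform bound on $\ell$.

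For each $q\in M$ I would exhibit a cheap admissible $\mathcal{L}$-curve $\gamma:[0,\tau]\to M$ with $\gamma(0)=x_0$ and $\gamma(\tau)=q$, taking $\gamma\equiv x_0$ on $[0,(1-\alpha)\tau]$ and $\gamma|_{[(1-\alpha)\tau,\tau]}$ a reparametrized minimizing $g(t)$-geodesic from $x_0$ to $q$, for a small absolute constant $\alpha\in(0,1)$ to be optimized. The stationary segment contributes at most $\int_0^{(1-\alpha)\tau}\sqrt{s}\,R(x_0,T-s)\,ds\le c_2\int_0^{(1-\alpha)\tau}s^{-1/2}\,ds = 2c_2\sqrt{(1-\alpha)\tau}$ by the hypothesis $R(x_0,\cdot)\le c_2(T-\cdot)^{-1}$, and on the moving segment the kinetic part $\int\sqrt{s}\,|\dot\gamma|^2_{g(T-s)}\,ds$ is controlled by the diameter bound $d_{g(t)}(x_0,q)\le c_1\sqrt{\tau}$, contributing an amount of order $c_1^2\sqrt{\tau}/\alpha$ once the metric distortion is under control.

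The principal obstacle is the scalar-curvature contribution $\int_{(1-\alpha)\tau}^\tau \sqrt{s}\,R(\gamma(s),T-s)\,ds$ along the moving segment, together with the associated length distortion between $g(T-s)$ and $g(t)$, since the hypotheses provide no pointwise upper bound on $R$ off $x_0$ and the globally preserved lower bound $R\ge R_{\min}(0)$ (from the maximum principle applied to $\partial_t R\ge \Delta R+\frac{2}{n}R^2$) gives only the wrong-sign inequality. I would handle this by restricting to $t>T/2$ so that the moving interval $[t,t+\alpha\tau]$ has length at most $\alpha T/2$, combining $R\ge R_{\min}(0)$ with $\partial_t d\mathrm{vol}=-R\,d\mathrm{vol}$ to bound the volume distortion by a multiplicative constant $e^{|R_{\min}(0)|\alpha T/2}$, and absorbing the residual $R^+$-contribution using a spacetime integral bound on $R$ derived from $\mu$-entropy monotonicity and the initial Sobolev constants. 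Optimizing $\alpha$ then delivers $\mathcal{L}(\gamma)/(2\sqrt{\tau})\le C$ uniformly for $q\in M$ and $t\in(T/2,T)$, with $C$ depending only on $c_1,c_2,n,T$ and $g(0)$, completing the argument.
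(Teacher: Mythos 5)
Your overall architecture is genuinely different from the paper's: you want to run Perelman's reduced-volume monotonicity based at $(x_0,T)$ and convert a uniform upper bound on the reduced distance $\ell$ into the volume bound, whereas the paper works with the heat kernel $G(x,t;x_0,s)$, proving an upper bound for $\int_M G\,d\mu_{g(t)}$ from the evolution of the volume form and a pointwise lower bound $G(x,t;x_0,s)\ge C(t-s)^{-n/2}$ from Perelman's Harnack-type inequality for the conjugate heat equation (which only requires $R$ along the single worldline of $x_0$) followed by a gradient estimate to propagate that lower bound over the whole manifold via the diameter hypothesis. The key structural advantage of the paper's route is that at no stage does it need to control $R$ or the metric distortion along a spatial curve joining $x_0$ to an arbitrary point $q$; the spreading from $x_0$ to $M$ is done by a parabolic gradient estimate that holds with no curvature assumptions.

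Your proposal, as it stands, has two genuine gaps, both located in the moving segment of your test $\mathcal{L}$-curve. First, the term $\int_{(1-\alpha)\tau}^{\tau}\sqrt{s}\,R(\gamma(s),T-s)\,ds$ is a line integral of $R$ along one specific curve, and the hypotheses give no pointwise upper bound on $R$ away from $x_0$; the fix you propose, ``a spacetime integral bound on $R$,'' is an $L^1$-in-space-and-time quantity and cannot control the restriction of $R$ to a one-dimensional curve (near a forming singularity $R$ can blow up on a set of arbitrarily small measure through which your geodesic must pass). Second, the kinetic term requires comparing $|\dot\gamma|^2_{g(T-s)}$ with $|\dot\gamma|^2_{g(t)}$ for $s$ ranging over $[(1-\alpha)\tau,\tau]$; length distortion under Ricci flow is governed by Ricci curvature bounds along the curve, not by the scalar curvature lower bound or by the volume-form distortion you invoke, so the claim that $d_{g(t)}(x_0,q)\le c_1\sqrt{\tau}$ yields a kinetic contribution of order $c_1^2\sqrt{\tau}/\alpha$ is unjustified. (The limiting procedure defining $\ell$ based at the singular time and the inequality $\tilde V\le 1$ are comparatively minor technical points, but they too would need an argument.) Unless you can supply curvature control on a parabolic neighborhood of the moving segment — which the hypotheses do not provide — the reduced-distance approach cannot be completed, and you should instead bound the conjugate heat kernel from below at $x_0$ and propagate that bound by a gradient estimate, as the paper does.
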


If the curvature condition in Proposition \ref{UB} is replaced by the stronger one that $R(x,t)\leq c_2(T-t)^{-1}$ {\it for all} $x\in M$, then the theorem follows directly from Q. Zhang's Theorem 1.1 in \cite{QZhang12}.
However, by mimicking Zhang's argument carefully, one can see that the Type I curvature assumption can be reduced as $R(x_0,t)\leq c_1(T-t)^{-1}$ {\it for some} $x_0\in M$. 
For the reader's convenience, we include an outline of Zhang's proof in Section 6.

The theorem above relates to the following conjecture. 
Roughly speaking, we suspect that $R$ cannot be of Type II at every point on a manifold which shrinks to a point along the Ricci flow.

\begin{Conj}
Let $\{g(t)\}_{t\in [0,T)}$ be the solution of the Ricci flow on a closed Riemannian manifold $M^n$ with $n\geq 3$ and ${\rm diam}_{g(t)}\to 0$ as $t\to T$.
Then  $$\liminf_{t\to T} (T-t) R (x,t) < \infty$$
\end{Conj}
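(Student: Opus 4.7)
The plan is to argue by contradiction. Suppose the conjecture fails, so that for every $x\in M$ one has $\liminf_{t\to T}(T-t)R(x,t)=+\infty$; since a liminf equal to $+\infty$ is automatically a limit, this means $(T-t)R(x,t)\to +\infty$ as $t\to T$, pointwise in $x\in M$. The desired contradiction would come from the volume lower bound ${\rm Vol}_{g(t)}\geq C(T-t)^{n/2}$ provided by Theorem~\ref{1}. Indeed, if one could upgrade the pointwise blow-up to the uniform statement
$$\liminf_{t\to T}(T-t)R_{\min}(t)=+\infty,\qquad (\ast)$$
then for any $K>n/2$ one would have $R_{\min}(t)\geq K/(T-t)$ for all $t$ close enough to $T$, and since $-\frac{d}{dt}\log{\rm Vol}_{g(t)} = \bar R(t) \geq R_{\min}(t)$, integration would give ${\rm Vol}_{g(t)}\leq C'(T-t)^{K}$, directly contradicting Theorem~\ref{1}.

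All the work is therefore in proving $(\ast)$ from the pointwise hypothesis, and this is the main obstacle. The implication is genuinely false for general continuous functions (a ``moving dip'' of $R$ provides a counterexample), so the diameter hypothesis ${\rm diam}_{g(t)}\to 0$ and the parabolic PDE structure for $R$ must both be used. My first move would be a Baire-category argument applied to the closed increasing family
$$F_N(t_0):=\{x\in M : R(x,s)\geq N/(T-s)\ \text{for all}\ s\in[t_0,T)\},$$
which is closed in $M$ by continuity of $R$ and whose union over $t_0<T$ equals $M$ by the pointwise hypothesis. Baire furnishes, for each $N$, a non-empty open set $U_N\subset F_N(t_0^N)$ on which the target bound $R\geq N/(T-t)$ already holds uniformly for $t\in[t_0^N,T)$.

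The harder second step is to propagate this bound from $U_N$ to all of $M$. Here I would combine ${\rm diam}_{g(t)}\to 0$ (so that every point of $M$ lies within vanishing $g(t)$-distance of $U_N$) with a parabolic Harnack inequality for the scalar curvature (Hamilton's trace Harnack when applicable, or Perelman's reduced-distance Harnack in general), which produces an estimate of the shape $R(y,t)\geq c\, R(x,t_0)\exp(-d(x,y)^2/[\text{const}\cdot(t-t_0)])$ for some $x\in U_N$; a bound of the form $R_{\min}(t)\geq c'N/(T-t)$ would then follow, giving $(\ast)$. The principal difficulty is quantitative: the Harnack damping is harmless only when ${\rm diam}_{g(t)}^2=O(T-t)$, whereas the hypothesis provides only ${\rm diam}_{g(t)}\to 0$; and Hamilton's sharpest Harnack requires a curvature-sign assumption that is not available here. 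Bridging this gap — either by a Harnack inequality valid without any curvature sign, or by deducing a parabolic rate of diameter collapse from the volume lower bound together with the extinction hypothesis — appears to require genuinely new ideas, consistent with the statement being left here as an open conjecture.
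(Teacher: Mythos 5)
First, note that the statement you were asked to prove is presented in the paper as a \emph{Conjecture}: the authors give no proof, so there is no argument of theirs to compare yours against. Your proposal is likewise not a proof, and to your credit you say so explicitly in your final sentence. The concrete gap is the implication you label $(\ast)$: passing from the negated conclusion --- $(T-t)R(x,t)\to\infty$ pointwise in $x$ --- to a uniform bound $R_{\min}(t)\ge K/(T-t)$ with $K>n/2$ near $T$. The Baire step only produces, for each $N$, an open set $U_N$ and a time $t_0^N$ (both depending on $N$, with $t_0^N$ possibly tending to $T$ and $U_N$ possibly shrinking) on which the bound holds; propagating it to all of $M$ by a Harnack inequality fails for the two reasons you name, and for a third worth making explicit: to obtain the rate $1/(T-t)$ at time $t$ one must run the Harnack comparison from a time $t_0\approx t-c(T-t)$, at which point the damping factor $\exp(-{\rm diam}_{g(t)}^2/(t-t_0))$ is controlled only if ${\rm diam}_{g(t)}^2=O(T-t)$, strictly stronger than the hypothesis ${\rm diam}_{g(t)}\to 0$; and no pointwise Harnack inequality for $R$ alone is available without a curvature-sign hypothesis. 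Even the weaker route via the heat-kernel lower bounds used in Section 6 is blocked, because those lower bounds degrade precisely when $\int_s^t\sqrt{t-\sigma}\,R(x_0,\sigma)\,d\sigma$ is large, which is exactly what your contradiction hypothesis forces at every point.

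What is correct and worth keeping: the reduction showing that $R_{\min}(t)\ge K/(T-t)$ with $K>n/2$ contradicts Theorem \ref{1} is sound (integrate $\frac{d}{dt}\ln{\rm Vol}_{g(t)}=-\bar R(t)\le -R_{\min}(t)\le -K/(T-t)$ to get ${\rm Vol}_{g(t)}\le C(T-t)^K=o\bigl((T-t)^{n/2}\bigr)$), and it parallels Remark 1 of the paper, where the authors observe that a Type I lower bound on the \emph{average} scalar curvature already yields ${\rm Vol}_{g(t)}\le C(T-t)^{n/2}$; that same remark records the authors' own version of your difficulty, namely that failure of the average bound produces only a sequence of space-time points with Type I scalar curvature rather than a fixed point $x_0$. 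So your strategy is a reasonable line of attack and correctly locates where the difficulty lies, but the step $(\ast)$ is exactly where the open problem lives; as written the proposal does not establish the conjecture.
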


\ \\
{\bf Acknowledgement.} The first author appreciates Mao-Pei Tsui for suggesting 
him to compare the volume of sphere and other manifolds. He is always indebted to Shu-Cheng Chang and Huai-Dong Cao for their constant supports and discussions.

\section{Lower bounds involving Sobolev constants}

Let $\{g(x,t)\}_{t\in [0,T)}$ be the solution of the Ricci flow on a closed Riemannian manifold $M^n$ with $n\geq 3$.
Perelman's $\mathcal{W}$-functional is defined by
$$\mathcal{W}(g(x,t),\phi(x,t),\tau(t)):=  (4\pi\tau)^{-\frac{n}{2}}\int_M \left[   \tau(4|\nabla \phi|^2 +R\phi^2)-\phi^2\ln\phi^2 -n\phi^2  \right] d{\rm vol}_{g}$$
for all $\phi(\cdot,t)\in W^{1,2}(M)$. 

Perelman proved the monotonicity of $\mathcal{W}$ along the Ricci flow \cite{Perelman02}. 
Precisely, for any $\tau(t)$ and $\phi$ such that $\tau'=-1$ and $\frac{\partial}{\partial t}\phi^2=-\Delta\phi^2 +(R-\frac{n}{2\tau})\phi^2$, 
$\mathcal{W}(g(x,t),\phi(x,t),\tau(t))$ and
$\mu:=\inf_{\|\phi\|_2^2=(4\pi\tau)^{\frac{n}{2}}}\mathcal{W}(g(x,t),\phi(x,t),\tau(t))$ are nondecreasing with respect to $t$.
In particular, the value of $\mu$ at $t>0$ is greater than or equal to the value at $t=0$.
Namely, 
$$\inf_{\|\phi\|_2^2=(4\pi\tau(t))^{\frac{n}{2}}}\mathcal{W}(g(t),\phi,\tau(t))\geq \inf_{\|\phi\|_2^2=(4\pi\tau(0))^{\frac{n}{2}}}\mathcal{W}(g(0),\phi,\tau(0)),$$
and equivalently
$$\sup_{\|\phi\|_2^2=(4\pi\tau(t))^{\frac{n}{2}}}-\mathcal{W}(g(t),\phi,\tau(t))\leq \sup_{\|\phi\|_2^2=(4\pi\tau(0))^{\frac{n}{2}}}-\mathcal{W}(g(0),\phi,\tau(0)).$$

Hence 
\begin{align*}
& \sup_{\|\phi\|_2^2=(4\pi\tau(t))^{\frac{n}{2}}}(4\pi\tau(t))^{-\frac{n}{2}}\int_M \left[  \phi^2\ln\phi^2 +n\phi^2 -\tau(t)(4|\nabla \phi|^2 +R\phi^2)  \right] d{\rm vol}_{g(t)}\\
\leq &\ \sup_{\|\phi\|_2^2=(4\pi \tau(0))^{\frac{n}{2}}}(4\pi \tau(0))^{-\frac{n}{2}}\int_M \left[  \phi^2\ln\phi^2 +n\phi^2 -\tau(0)(4|\nabla \phi|^2 +R\phi^2) \right] d{\rm vol}_{g(0)}.
\end{align*}
From now on, we denote $\tau=\tau(t)$, $\tau_0=\tau(0)$ and $V(t)={\rm Vol}_{g(t)}$. 
Considering the (spatial) constant function $\phi^2=(4\pi\tau)^{\frac{n}{2}}V(t)^{-1}$ at time $t$, one derives 
\begin{align}\label{W}
& -\hspace{-4.75mm}\int_M \left[   \ln \frac{(4\pi \tau)^{\frac{n}{2}}}{V(t)}+n -\tau R \right] d{\rm vol}_{g(t)} \notag\\
\leq &\ \sup_{\|\phi\|_2^2=(4\pi \tau_0)^{\frac{n}{2}}}(4\pi \tau_0)^{-\frac{n}{2}}\int_M \left[  \phi^2\ln\phi^2 +n\phi^2 -\tau_0(4|\nabla \phi|^2 +R\phi^2) \right] d{\rm vol}_{g(0)}\\
=&\  \sup_{\|\phi\|_2^2=1}\int_M \left[  \phi^2(\ln\phi^2 +\ln (4\pi \tau_0)^{\frac{n}{2}}) +n\phi^2 -\tau_0(4|\nabla \phi|^2 +R\phi^2) \right] d{\rm vol}_{g(0)},\notag
\end{align}
i.e.,
\begin{align*}
& -\ln V(t)+\ln(4\pi\tau)^{\frac{n}{2}} -\tau -\hspace{-4.75mm}\int_MR\  d{\rm vol}_{g(t)}\\
\leq &\  \sup_{\|\phi\|_2^2=1}\int_M \left[  \phi^2\ln\phi^2   -\tau_0(4|\nabla \phi|^2 +R\phi^2) \right] d{\rm vol}_{g(0)} +\ln (4\pi \tau_0)^{\frac{n}{2}}.
\end{align*}
On the other hand, since $\tau'=-1$ and $\frac{d}{dt} V(t)=-\int_MR\ d{\rm vol}$, one has
$$\frac{d}{dt} \left(\tau\ln V(t) \right)=-\ln V(t)-\tau-\hspace{-5mm} \int_M R \ d{\rm vol}.
$$
Therefore,
\begin{align}\label{key}
& \frac{d}{dt}\left(\tau\ln V(t) \right)\leq   \sup_{\|\phi\|_2^2=1}\int_M \left[  \phi^2\ln\phi^2  -\tau_0(4|\nabla \phi|^2 +R\phi^2) \right] d{\rm vol}_{g(0)} - \ln\left(\frac{\tau}{\tau_0}\right)^{\frac{n}{2}}.
\end{align}

Inequalities (\ref{W}) and the right hand side of (\ref{key}) were observed and used before, see for example \cite{QZhang07, Ye07}. The left hand side of (\ref{key}) also occurred in a more general form in \cite{QZhang07,QZhang12}.  
However, they were used for tracing the evolution of Sobolev constants or fundamental solutions of the (conjugate) heat equation, instead of the global volume function $V(t)$. 
In Ye's Proposition (cf. Proposition 1 in the introduction), Ye needs the positivity assumption 
to make sure that $A(t)$ and $B(t)$ are under control along the Ricci flow.
Indeed, as pointed out by Ye, when the assumption $\lambda>0$ is removed, Hamilton-Isenberg's example shows that local volume could collapse, which means that $A(t)$ and $B(t)$ must become wild. So Ye guessed that the positivity assumption of $\lambda$ is indispensable \cite[p. 4]{Ye07}. 
However, we show that, although the Sobolev constants and the local volume could be bad, the global volume remains under control.

\begin{Thm*1}
Let $\{g(t)\}_{t\in [0,T)}$ be the solution of the Ricci flow on a closed Riemannian manifold $M^n$ with $n\geq 3$
and $A,B$ be Sobolev constants of $(M,g(0))$.
Then $${\rm Vol}_{g(t)} \geq e^{ T\lambda - \frac{na}{8} \left( A(\lambda-r ) +4 B\right)} a^{\frac{n}{2}}\left(\frac{T-t}{T}\right)^{\frac{n}{2}}$$
for all $a\in(0,\frac{8T}{nA}]$, where 
$r:=\inf_{\|\phi\|_2^2=1} \int_M R\phi^2 \ d{\rm vol}_{g(0)}$ and
$\lambda:=\inf_{\|\phi\|_2^2=1} \int_M  4|\nabla\phi|^2+R\phi^2\ d{\rm vol}_{g(0)}$.
In particular, when $a=\frac{8T}{nA}$, we obtain a lower bound 
$${\rm Vol}_{g(t)}  \geq e^{T(r-4BA^{-1})}\left(\frac{8}{nA}\right)^{\frac{n}{2}}( T-t )^{\frac{n}{2}},$$
which does not depend on $\lambda$.
\end{Thm*1}

\begin{proof}

For all $\phi\in W^{1,2}(M)$, the Sobolev inequality implies that 
\begin{align*}
 \int_M \phi^2\ln\phi^2\ d{\rm vol}_{g(0)}
\leq &\ \frac{n}{2}\ln\left( A\int_M |\nabla\phi|^2\ d{\rm vol}_{g(0)}+B \int_M\phi^2 d{\rm vol}_{g(0)}  \right)\\
\leq &\ \frac{n}{2}a \left(A\int_M |\nabla\phi|^2\ d{\rm vol}_{g(0)}+B \int_M\phi^2 d{\rm vol}_{g(0)} \right)   -\frac{n}{2}\ln a -\frac{n}{2}\\
=&\ \frac{naA}{8} \int_M 4|\nabla\phi|^2\ d{\rm vol}_{g(0)}+\frac{naB}{2}\int_M\phi^2\ d{\rm vol}_{g(0)} -\frac{n}{2}\ln a -\frac{n}{2}.
\end{align*}
Note that the second inequality follows from the fact $\ln x \leq ax -\ln a-1$.
Hence
\begin{align*}
& \ \int_M \left[  \phi^2\ln\phi^2  -\tau_0(4|\nabla \phi|^2 +R\phi^2) \right] d{\rm vol}_{g(0)}\\
\leq &\ \left(\frac{naA}{8}-\tau_0\right)\int_M 4|\nabla\phi|^2 +R\phi^2\ d{\rm vol}_{g(0)}-\frac{naA}{8}   \int_M R\phi^2 \ d{\rm vol}_{g(0)} \\
&\  +\frac{naB}{2}\int_M\phi^2\ d{\rm vol}_{g(0)}  -\frac{n}{2}\ln a -\frac{n}{2}.
\end{align*}
Since $a\leq\frac{8\tau_0}{nA}$, $\frac{naA}{8}-\tau_0$ is nonpositive and we have
\begin{align*}
& \ \sup_{\|\phi\|_2^2=1} \int_M \left[  \phi^2\ln\phi^2  -\tau_0(4|\nabla \phi|^2 +R\phi^2) \right] d{\rm vol}_{g(0)}\\
\leq &\ \left(\frac{naA}{8}-\tau_0\right) \lambda -\frac{naA}{8}  r   +\frac{naB}{2}  -\frac{n}{2}\ln a -\frac{n}{2}\\
= &\   -\tau_0\lambda + \frac{naA}{8}\left(\lambda-r\right)   +\frac{naB}{2}  -\frac{n}{2}\ln a -\frac{n}{2},
\end{align*}
where $\lambda:=\inf_{\|\phi\|_2^2=1} \int_M  4|\nabla\phi|^2+R\phi^2\ d{\rm vol}_{g(0)}$ 
and $r:=\inf_{\|\phi\|_2^2=1} \int_M R\phi^2 \ d{\rm vol}_{g(0)}$.

Applying it to the key inequality (\ref{key}), we obtain
\begin{align*}
 \frac{d}{dt} \left(\tau\ln V(t)\right) 
\leq &\ \sup_{\|\phi\|_2^2=1}\int_M \left[  \phi^2\ln\phi^2  -\tau_0(4|\nabla \phi|^2 +R\phi^2) \right] d{\rm vol}_{g(0)} - \ln\left(\frac{\tau}{\tau_0}\right)^{\frac{n}{2}}\\
\leq &\   -\tau_0\lambda + \frac{naA}{8}\left(\lambda-r\right)      +\frac{naB}{2}  -\frac{n}{2}\ln a -\frac{n}{2}
- \ln\left(\frac{\tau}{\tau_0}\right)^{\frac{n}{2}}.
\end{align*}
Taking $\tau=T-t$ and integrating the inequality from $t$ to $T$, we have
$$ -\ln V(t) \leq - T\lambda + \frac{naA}{8}  (\lambda-r) +\frac{naB}{2}  -\ln \left(\frac{a}{T}\right)^{\frac{n}{2}} - \ln\left(T-t\right)^{\frac{n}{2}},$$
i.e.,
$$ V(t)\geq   e^{ T\lambda - \frac{naA}{8}  (\lambda-r) -\frac{naB}{2}} a^{\frac{n}{2}}\left(\frac{T-t}{T}\right)^{\frac{n}{2}}$$
for all $t\in [0,T)$. 
It is easy to see that $\lambda$ can be cancelled out when $a=\frac{8T}{nA}$.
\end{proof}


\section{Sharp volume bound and the best Sobolev constants of the sphere}

In order to acquire a sharp estimate, one has to choose a specific constant $a$ in Theorem \ref{1}. 
In fact, the lower bound in Theorem \ref{1} involves the function $f(a)=e^{-Ca}a^{\frac{n}{2}}$,
which has a unique absolute maximum at $a=\frac{n}{2C}$. So it is not hard to see that the best choice of $a$
is $4(A(\lambda-r)+4B)^{-1}$. 
However, due to a technical but crucial reason arising from the proof, $a$ cannot exceed $\frac{8T}{nA}$.
Hence the best choice of $a$ is allowed only when $B$ is chosen to be large, say $B\geq \frac{nA}{8T}$.
This fact shows that the best choice of $a$ is not necessarily given by the best Sobolev constants.

The best choice of $a$, where $f(a)$ attains it maximum, makes the lower bound in Theorem \ref{1}
becomes  $ e^{ T\lambda-\frac{n}{2}} a^{\frac{n}{2}}\left(\frac{T-t}{T}\right)^{\frac{n}{2}}$.
We shall recall some facts from the theory of Sobolev constants (cf. \cite{DruetHebey02}) and 
show that this lower bound can be attained by the shrinking sphere. 
From now on we consider closed Riemannian manifolds with dimension $n\geq 3$
and denote $\omega_n$ as the volume of the unit sphere, whose sectional curvatures are $1$.
In \cite{HebeyVaugon96}, Hebey and Vaugon showed that one can always choose
$A=\frac{4}{n(n-2)}\omega_n^{-\frac{2}{n}}$ for a given $(M^n,g)$ so that the Sobolev inequality holds.
Namely, there exists a constant $B>0$ such that
 $$ \left( \int_M |u|^{\frac{2n}{n-2}}\ d{\rm vol} \right)^{\frac{n-2}{n}}\leq \frac{4}{n(n-2)}\omega_n^{-\frac{2}{n}}\int_M  |\nabla u|^2\ d{\rm vol}+B\int_M u^2\ d{\rm vol}$$
for all $u\in W^{1,2}(M)$.
The infimum of all the $B$'s which make the inequality valid is called the best $B$-constant and is  denoted by $B_0$.
For the unit sphere, a well-known result due to T. Aubin \cite{Aubin76} states that $B_0$ is $\omega_n^{-\frac{2}{n}}$.
Hence, the Sobolev inequality holds on the unit sphere when we choose $B=\frac{n-1}{n-2}\omega_n^{-\frac{2}{n}}>B_0$
and we have

\begin{Thm*2}
Let $\{g(t)\}_{t\in [0,T)}$ be the solution of the Ricci flow on a closed Riemannian manifold $M^n$ with $n\geq 3$.
Suppose that $A$ and $B\geq \frac{nA}{8T}$ are Sobolev constants of $(M,g(0))$. 
Then 
$${\rm Vol}_{g(t)} \geq e^{ T\lambda-\frac{n}{2}}\left(\frac{4}{A(\lambda-r)+4B}\right)^{\frac{n}{2}}\left(\frac{T-t}{T}\right)^{\frac{n}{2}}.$$
The bound is achieved when $(M,g(0))$ is the unit sphere with $A=\frac{4}{n(n-2)}\omega_n^{-\frac{2}{n}}$ and $B=\frac{n-1}{n-2}\omega_n^{-\frac{2}{n}}$.
\end{Thm*2}

\begin{proof}
The first statement follows easily from Theorem \ref{1} by taking $a=4(A(\lambda-r)+4B)^{-1}$. 

For the second statement,
we consider the shrinking sphere with $R_{g(0)}=n(n-1)$, it is easy to compute that $T=\frac{1}{2(n-1)}$ and 
${\rm Vol}_{g(t)}=\left(2(n-1)\right)^{\frac{n}{2}}\omega_n\left(T-t\right)^{\frac{n}{2}}$.
On the other hand, when we choose 
$A=\frac{4}{n(n-2)}\omega_n^{-\frac{2}{n}}$ and $B=\frac{n-1}{n-2}\omega_n^{-\frac{2}{n}}$,
the lower bound becomes
$$e^{\frac{1}{2(n-1)}n(n-1)-\frac{n}{2}} \left(2(n-1)\frac{4}{A(\lambda-r)+4B}\right)^{\frac{n}{2}}\left(T-t\right)^{\frac{n}{2}}
=\left(2(n-1)\right)^{\frac{n}{2}}\omega_n\left(T-t\right)^{\frac{n}{2}}$$
because $\lambda=r=R=n(n-1)$.
\end{proof}

\section{Lower bounds involving $\mu$}

In this section, we derive lower and upper bounds of global volume in terms of $\mu(g(0),T)$.
Since we do not interpret $\mu$ by using Sobolev constants, all the results in this section hold for $n\geq 2$, instead of $n\geq 3$.

\begin{Thm*mu}
Let $\{g(t)\}_{t\in [0,T)}$ be the solution of the Ricci flow on a closed Riemannian manifold $M^n$ with $n\geq 2$. Denote $\mu=\inf_{\|\phi\|_2^2=(4\pi T)^{\frac{n}{2}}}\mathcal{W}(g(0),\phi,T)$. Then 
$$ {\rm Vol}_{g(t)}  
\geq(4\pi)^{\frac{n}{2}}e^{\mu+\frac{n}{2}}(T-t)^{\frac{n}{2}}.$$
As a consequence of the volume lower bound, for any closed Riemannian manifold $(M,g)$, one has
$$\mu(g,T) \leq -\frac{n}{2}+ \ln {\rm Vol}_{g}(M) -\frac{n}{2}\ln 4\pi T.$$ 
In particular, for any Ricci flow defined on a closed manifold, the maximal time $T$ cannot exceed $\left(4\pi e\right)^{-1}(e^{-\mu(g(0),T)}{\rm Vol}_{g(0)})^{\frac{2}{n}}$.
\end{Thm*mu}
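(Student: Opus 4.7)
The plan is to combine Perelman's monotonicity of the $\mu$-entropy with an upper bound for $\mu$ coming from a spatially constant test function, and then unfold the resulting differential inequality for $V(t):={\rm Vol}_{g(t)}$.

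First, by Perelman's monotonicity applied with $\tau(t)=T-t$, one has $\mu(g(t),T-t)\geq \mu(g(0),T)=\mu$ for all $t\in[0,T)$. On the other hand, the spatially constant test function $\phi^2\equiv (4\pi(T-t))^{n/2}/V(t)$ (which satisfies $\|\phi\|_2^2=(4\pi(T-t))^{n/2}$) is admissible in the definition of $\mu(g(t),T-t)$, and since $\nabla\phi\equiv 0$, a direct computation yields
$$\mu(g(t),T-t) \leq (T-t)\bar{R}(t) + \ln V(t) - \tfrac{n}{2}\ln(4\pi(T-t)) - n,$$
where $\bar{R}(t):=V(t)^{-1}\int_M R\, d{\rm vol}_{g(t)}$ denotes the average scalar curvature.

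Combining the two inequalities and using the identity $\bar{R}(t)=-(\ln V)'(t)$ (which follows from $V'(t)=-\int_M R\, d{\rm vol}$), I would introduce the auxiliary quantity $f(t):=\ln V(t)-\tfrac{n}{2}\ln(4\pi(T-t))-\mu-\tfrac{n}{2}$. A short algebraic rearrangement gives the differential inequality $(T-t)f'(t)\leq f(t)$, so that $H(t):=(T-t)f(t)$ satisfies $H'(t)=-f(t)+(T-t)f'(t)\leq 0$ and is therefore nonincreasing on $[0,T)$.

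The main technical step is to conclude $H(t)\geq 0$, which is equivalent to the desired volume bound. Since $H$ is nonincreasing, this reduces to verifying $\lim_{t\to T^-}H(t)\geq 0$. This is the main obstacle, and I would resolve it by invoking Theorem~\ref{1}, already proved: it furnishes a lower bound of the form $V(t)\geq C(T-t)^{n/2}$ for some positive constant $C$ depending only on the initial data. Substituting this into the definition of $H$ shows that $H(t)\geq (T-t)\bigl[\ln C-\tfrac{n}{2}\ln 4\pi-\mu-\tfrac{n}{2}\bigr]$, whose right-hand side tends to $0$ as $t\to T^-$. Hence $H\geq 0$ throughout $[0,T)$, which unfolds into
$$V(t) \geq (4\pi)^{n/2}\, e^{\mu+n/2}(T-t)^{n/2}.$$
Both corollaries follow immediately by evaluating this bound at $t=0$: taking logarithms and solving for $\mu$ gives $\mu(g,T)\leq \ln{\rm Vol}_g(M)-\tfrac{n}{2}-\tfrac{n}{2}\ln 4\pi T$, while solving instead for $T$ yields the upper bound $T\leq (4\pi e)^{-1}\bigl(e^{-\mu(g(0),T)}\,{\rm Vol}_{g(0)}\bigr)^{2/n}$ on the maximal existence time.
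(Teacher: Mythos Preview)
Your argument is essentially the paper's: both use Perelman's monotonicity together with the spatially constant test function to obtain the differential inequality
\[
-\frac{d}{dt}\bigl((T-t)\ln V(t)\bigr)\ \geq\ \frac{n}{2}\ln\bigl(4\pi(T-t)\bigr)+n+\mu,
\]
and then integrate from $t$ to $T$. Your repackaging via $H(t)=(T-t)f(t)$ with $H'\leq 0$ is exactly this inequality, and ``$H$ nonincreasing with $\lim_{t\to T}H\geq 0$'' is just the integrated form.

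The one substantive difference is how you treat the boundary contribution at $t\to T$. The paper simply integrates to $T$ without comment, whereas you invoke Theorem~\ref{1} to guarantee $V(t)\geq C(T-t)^{n/2}$ and hence $\liminf_{t\to T}H(t)\geq 0$. That is logically sound, but it costs you the case $n=2$: Theorem~\ref{1} requires $n\geq 3$, and the paper explicitly stresses that this section is meant to be independent of the Sobolev-constant arguments and valid for all $n\geq 2$. A cleaner way to handle the endpoint, valid in every dimension, is this: if $T$ is strictly below the maximal existence time then $V$ extends continuously to $[0,T]$ with $V(T)>0$, so $(T-t)\ln V(t)\to 0$ trivially and the bound follows; for maximal $T$ one then lets $T'\uparrow T$ and uses the continuity of $\tau\mapsto\mu(g(0),\tau)$.
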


\begin{proof}
Recall that Perelman's $\mu$-entropy
\begin{align*}
\mu(g(t),\tau):=\ & \inf_{\|\phi\|_2^2=(4\pi \tau)^{\frac{n}{2}}}\mathcal{W}(g(x,t),\phi(x,t),\tau(t))\\
=\ &  \inf_{\|\phi\|_2^2=(4\pi \tau)^{\frac{n}{2}}}(4\pi\tau)^{-\frac{n}{2}}\int_M \left[   \tau(4|\nabla \phi|^2 +R\phi^2)-\phi^2\ln\phi^2 -n\phi^2  \right] d{\rm vol}_{g}.
\end{align*}
is non-decreasing along the Ricci flow for any $\tau(t)$ and $\phi$ such that $\tau'=-1$ and $\frac{\partial}{\partial t}\phi^2=-\Delta\phi^2 +(R-\frac{n}{2\tau})\phi^2$.

Denote $V(t)={\rm Vol}_{g(t)}$, $\tau_0=\tau(0)$, $\tau=\tau(t)$, and consider $\phi^2 = (4\pi\tau)^{\frac{n}{2}}V(t)^{-1}$ at time $t$.
So, by the monotonicity of $\mu$, one has
\begin{align*}
\mu(g(0),\tau_0)
\leq\ &\mu(g(t),\tau)\\
\leq\ & (4\pi\tau)^{-\frac{n}{2}}\int_M [(\tau R\phi^2 -\phi^2\ln \phi^2 -n\phi^2)] \ d{\rm vol}_{g(t)}\\
=\ &  -\hspace{-4.75mm}\int_M [\tau R+\ln V(t)] \ d{\rm vol}_{g(t)}  -\ln(4\pi\tau)^{\frac{n}{2}} -n\\
=\ & -\tau\frac{d}{dt}(\ln V(t)) +\ln V(t) -\ln(4\pi\tau)^{\frac{n}{2}} -n
\end{align*}
and thus
$$-\frac{d}{dt}\left(\tau\ln V(t)\right)
\geq \frac{n}{2}\ln\tau +\frac{n}{2}\ln(4\pi)+n +\mu(g(0),\tau_0).$$
Taking $\tau=T-t$ and integrating the inequality from $t$ to $T$, we obtain
$$ V(t)\geq (4\pi)^{\frac{n}{2}}e^{\mu(g(0),T)+\frac{n}{2}}(T-t)^{\frac{n}{2}}.$$
\end{proof}

One may compare the upper bound of $\mu$ with a former result given by one of the authors as follows.

\begin{Prop}[\cite{ZZhang07}, cf. \cite{ChowPart3}]
For any closed Riemannian manifold $(M^n,g)$, one has 
$$\mu(g,T) \leq -n+T\lambda + e^{-1}{\rm Vol}_{g}(M) -\frac{n}{2}\ln 4\pi T.$$ 
Moreover, if $\lambda\leq 0$, then 
$$\mu(g,T) \leq -n+e^{-1} + \ln {\rm Vol}_{g}(M) -\frac{n}{2}\ln 4\pi T.$$ 
\end{Prop}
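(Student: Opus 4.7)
The plan is to bound $\mu(g,T)$ above by evaluating $\mathcal{W}(g,\phi,T)$ on a cleverly chosen test function $\phi$ with $\|\phi\|_2^2 = (4\pi T)^{n/2}$. For both inequalities I take $\phi$ proportional to the minimizer of the Rayleigh-type quotient defining $\lambda$: let $\psi \in W^{1,2}(M)$ satisfy $\|\psi\|_2 = 1$ and $\int_M (4|\nabla\psi|^2 + R\psi^2)\, d{\rm vol}_g = \lambda$, and set $\phi = (4\pi T)^{n/4}\psi$. A direct substitution---pulling the constant $\ln(4\pi T)^{n/2}$ out of the logarithmic term and using $\int_M\psi^2 \, d{\rm vol}_g=1$---gives
\[
\mathcal{W}(g,\phi,T) = T\lambda - \int_M \psi^2\ln\psi^2\, d{\rm vol}_g - \tfrac{n}{2}\ln(4\pi T) - n,
\]
so the entire problem reduces to bounding the entropy $-\int_M\psi^2\ln\psi^2\, d{\rm vol}_g$ from above.

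For the first inequality I invoke the pointwise estimate $-x\ln x \le e^{-1}$, valid for all $x\ge 0$ with equality at $x = e^{-1}$. Applied with $x = \psi^2(p)$ and integrated this gives $-\int_M\psi^2\ln\psi^2\, d{\rm vol}_g \le e^{-1}\,{\rm Vol}_g(M)$, and inserting this into the display above is exactly the first claimed inequality.

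For the second inequality, where $\lambda\le 0$, I use a sharper Jensen-type argument. Equip $M$ with the probability measure $d\sigma := d{\rm vol}_g/{\rm Vol}_g(M)$ and consider the nonnegative function $f := {\rm Vol}_g(M)\,\psi^2$, which has mean one against $d\sigma$. Since $t\mapsto t\ln t$ is convex on $[0,\infty)$, Jensen's inequality gives $\int_M f\ln f\, d\sigma \ge 0$; expanding $f\ln f = {\rm Vol}_g(M)\psi^2(\ln{\rm Vol}_g(M) + \ln\psi^2)$ and using $\int\psi^2\, d{\rm vol}_g = 1$ rearranges to $-\int_M\psi^2\ln\psi^2\, d{\rm vol}_g \le \ln{\rm Vol}_g(M)$. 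Combined with $T\lambda \le 0$ this yields $\mu(g,T) \le \ln{\rm Vol}_g(M) - \tfrac{n}{2}\ln(4\pi T) - n$, which absorbs the extra additive $e^{-1}$ in the stated inequality as slack.

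The main obstacle is a soft one: verifying that a minimizer $\psi$ exists and is regular enough to serve as a test function in $\mathcal{W}$. Since $M$ is closed and $R$ is bounded, the Schr\"odinger-type operator $-4\Delta + R$ is self-adjoint with compact resolvent, so a smooth positive first eigenfunction exists by standard elliptic theory; in particular $\psi^2 > 0$ and the entropy integrand is well-defined (with the convention $0\ln 0 = 0$ at the boundary). After that, the proof is a short algebraic manipulation together with the two convexity facts above ($-x\ln x\le e^{-1}$ pointwise versus Jensen for $x\ln x$), the Jensen route becoming available precisely when one can afford to discard the $T\lambda$ term.
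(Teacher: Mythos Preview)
Your proof is correct. For the first inequality you and the paper do the same thing: test with the first eigenfunction of $-4\Delta+R$ (normalized so that $\|\phi\|_2^2=(4\pi T)^{n/2}$) and bound the entropy term pointwise via $-x\ln x\le e^{-1}$.

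For the second inequality your route differs from the paper's. The paper exploits the scale invariance $\mu(g,T)=\mu(Qg,QT)$: choosing $Q={\rm Vol}_g^{-2/n}$ makes ${\rm Vol}_{Qg}=1$, so applying the first inequality to $(Qg,QT)$ and unwinding $-\frac{n}{2}\ln(4\pi QT)=-\frac{n}{2}\ln(4\pi T)+\ln{\rm Vol}_g$ yields the stated bound with the $e^{-1}$ term intact. You instead bound the entropy directly by Jensen's inequality, obtaining $-\int_M\psi^2\ln\psi^2\,d{\rm vol}_g\le \ln{\rm Vol}_g(M)$ and hence
\[
\mu(g,T)\le T\lambda+\ln{\rm Vol}_g(M)-\tfrac{n}{2}\ln(4\pi T)-n,
\]
which holds for all $\lambda$ and, after dropping $T\lambda\le 0$, beats the paper's second inequality by the additive constant $e^{-1}$. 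So your approach is both more elementary (no rescaling trick needed) and slightly sharper; the paper's rescaling argument, on the other hand, is a clean illustration of how the first inequality already contains the second once one remembers that $\mu$ is scale invariant.
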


For the reader's convenience, we recall the proof.
\begin{proof}
The first inequality comes from the definition of $\mu$ and the fact $-x\ln x\leq e^{-1}$ for all $x\geq 0$.
When $\lambda<0$, we can simply remove the term $T\lambda$. 
However, a rescaling argument can do a better job. 
Indeed, since $\mu(g,T)=\mu(Qg,QT)$ for any $Q\in\mathbb{R}$,
when choosing $Q={\rm Vol}_{g}^{-\frac{2}{n}}$, one has ${\rm Vol}_{Qg}(M)=1$,
$$\mu(Qg,QT) \leq -n+QT\lambda + e^{-1}{\rm Vol}_{Qg}(M) -\frac{n}{2}\ln 4\pi QT\leq -n + e^{-1} -\frac{n}{2}\ln 4\pi QT$$
and thus the proposition is proved.
\end{proof}
More discussions about the behavior of $\mu$ and its applications can be found in \cite[Chapter 17]{ChowPart3}.

\section{Upper bounds}

Let $g(t)$, $t\in[0,T)$, be the solution of the Ricci flow on a closed Riemannian manifold $(M^n,g(0))$.
Along this flow, consider the heat kernel $G(x,t;y,s)$ for the heat operator 
$\partial_t -\Delta_x$.
Namely, fixing $y$ and $s$, $u(x,t):=G(x,t;y,s)$ satisfies 
$$\partial_t u(x,t)= \Delta_x u(x,t)\ \mbox{ and }\ \lim_{t\searrow s}u(x,t)=\delta_y(x).$$ 
One can consult Chow et al.'s book \cite[Ch. 24]{ChowPart3} for more details about the heat kernel.
In \cite[pp. 247, 251]{QZhang12}, Q. Zhang derived the following two-sided bound
for the integral heat kernel:
\begin{align*}
\left(1+C(t-s)\right)^\frac{n}{2} 
& \geq \int_M G(x,t;y,s) \ d\mu_{g(s)}(x)  \\
& \geq \frac{C}{(t-s)^{\frac{n}{2}}}\exp\left(-C\frac{{\rm dist}_{g(t)}^2(x,y)}{t-s} - \frac{1}{2\sqrt{t-s}}\int_s^{t}\sqrt{t-\tau}R(x_0,\sigma)d\sigma\right),
\end{align*}
where $C$'s are constants depending on $n,T$ and $g(0)$.
Hence, by our assumptions on the scalar curvature and the diameter, one obtains 
the following theorem, which is essentially due to Q. Zhang in \cite{QZhang12}.

\begin{Prop*UB}[cf. Theorem 1.1 (a) in \cite{QZhang12}]
Let $\{g(t)\}_{t\in [0,T)}$ be the solution of the Ricci flow on a closed Riemannian manifold $(M^n,g(0))$ with $n\geq 3$. If the diameter satisfies ${\rm diam}_{g(t)}\leq c_1\sqrt{T-t}$ and there exist a point $x_0\in M$ such that $R(x_0,t)\leq c_2(T-t)^{-1}$, then we have ${\rm Vol}_{g(t)}\leq C (T-t)^{\frac{n}{2}}$ for all $t>\frac{T}{2}$, where $C$ depends only on $c_1,c_2,n,T$ and $g(0)$. 
\end{Prop*UB}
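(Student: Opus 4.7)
The plan is to invoke the two-sided heat kernel estimate displayed just above the proposition at a carefully calibrated time pair, with the base point taken to be the distinguished $x_0$ from the hypothesis, so that each factor in the Gaussian lower bound becomes a universal constant depending only on $c_1, c_2, n, T$ and $g(0)$.

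Given $t \in (T/2, T)$, I would choose $s := 2t - T \in (0, T)$, so that $t - s = T - t$, and apply the pointwise heat kernel lower bound with $y = x_0$. The diameter hypothesis immediately gives
$$\frac{{\rm dist}_{g(t)}^2(x, x_0)}{t - s} \leq \frac{c_1^2(T-t)}{t-s} = c_1^2$$
uniformly in $x \in M$. For the curvature integral, the bound $T - \sigma \geq T - t = t - s$ on $[s, t]$, combined with $R(x_0, \sigma) \leq c_2/(T - \sigma)$, yields
$$\frac{1}{2\sqrt{t-s}}\int_s^t \sqrt{t-\sigma}\, R(x_0,\sigma)\, d\sigma \leq \frac{c_2}{2(t-s)^{3/2}}\int_s^t \sqrt{t-\sigma}\, d\sigma = \frac{c_2}{3}.$$
Substituting these into the lower bound produces a pointwise estimate $G(x, t; x_0, s) \geq C'(T-t)^{-n/2}$ valid for all $x \in M$, with $C'$ depending only on the stated data.

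Integrating this over $M$ against $d\mu_{g(s)}$ and combining with the upper bound $\int_M G(x,t;x_0,s)\, d\mu_{g(s)}(x) \leq (1 + C(t-s))^{n/2} \leq (1 + CT)^{n/2}$ then gives ${\rm Vol}_{g(s)} \leq C''(T-t)^{n/2}$. To transfer this from time $s$ to time $t$, I would invoke the scalar-maximum-principle lower bound $R \geq -\rho$ on $[0, T)$, with $\rho = \rho(g(0), T)$, available on any closed Ricci flow; combined with the volume evolution $\tfrac{d}{d\sigma}{\rm Vol}_{g(\sigma)} = -\int R\, d\mu_{g(\sigma)} \leq \rho\, {\rm Vol}_{g(\sigma)}$, this gives ${\rm Vol}_{g(t)} \leq e^{\rho T}\, {\rm Vol}_{g(s)} \leq C(T-t)^{n/2}$, as desired.

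The main technical hurdle is the single-point nature of the curvature hypothesis: the exponent in the heat-kernel lower bound involves a time integral of $R(x_0, \cdot)$, and the factor $\sqrt{t-\sigma}$ is precisely what tames the potential singularity of $c_2/(T-\sigma)$ at $\sigma = T$. The calibration $s = 2t - T$ is the specific one that keeps $T - \sigma$ bounded below by $t - s$ throughout the integration, which is exactly why the hypothesis $t > T/2$ enters the statement. Everything else reduces to a direct application of Zhang's heat kernel estimate together with elementary Ricci flow volume control.
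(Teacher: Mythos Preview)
Your argument is correct and follows essentially the same route as the paper: pick $s=2t-T$, apply the heat-kernel lower bound with $y=x_0$, and use the diameter and single-point scalar curvature hypotheses to bound each term in the exponent by a constant depending only on $c_1,c_2$. The only difference is cosmetic: the paper integrates the pointwise lower bound $G(x,t;x_0,s)\geq C_2(t-s)^{-n/2}$ against $d\mu_{g(t)}(x)$ and pairs it with the upper bound $\int_M G(x,t;x_0,s)\,d\mu_{g(t)}(x)\leq (1+C_1(t-s))^{n/2}$ (the display before the proposition should really carry $d\mu_{g(t)}$; that is what the proof actually derives), thereby obtaining ${\rm Vol}_{g(t)}$ directly and making your final transfer step from ${\rm Vol}_{g(s)}$ to ${\rm Vol}_{g(t)}$ unnecessary.
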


\begin{proof}
The proof is adapted from Zhang's local volume estimate in \cite{QZhang12}. 
The reader should be careful on tracing the dependence of the constant $C$, which varies line by line, in the following bounds.

Recall that $R(x,t)$ is either nonnegative, or negative somewhere and bounded below by the negative function $(\frac{1}{\min R_{g(0)}}-\frac{2t}{n})^{-1}$ for all $t>0$. 
Moreover, since the manifold is closed and $ \frac{d}{dt} d\mu_{g(t)}= -Rd\mu_{g(t)}$, one can derive 
$$ \frac{d}{dt}\int_M u(x,t)\  d\mu_{g(t)} = \int_M  \Delta_x u(x,t)  -R u(x,t)   d\mu_{g(t)} =  -\int_M R u(x,t)   d\mu_{g(t)}$$
and thus either $\frac{d}{dt}\int_M u(x,t)\  d\mu_{g(t)}\leq 0$ or
$$\frac{d}{dt}\int_M u(x,t)\  d\mu_{g(t)} \leq \frac{n}{2}\left(t-\frac{n}{2\min R_{g(0)}}\right)^{-1} \int_M u(x,t)   d\mu_{g(t)}.$$
Integrating it from $s$ to $t$, we obtain either 
$$\int_M u(x,t)\  d\mu_{g(t)}\leq \lim_{t\searrow s}\int_M u(x,t)\  d\mu_{g(t)}=1$$ 
or
\begin{align*}
\int_M u(x,t)\  d\mu_{g(t)} 
\leq &\ \left(\frac{t-\frac{n}{2\min R_{g(0)}}}{s-\frac{n}{2\min R_{g(0)}}}\right)^{\frac{n}{2}}
\leq \left(1+ C_1(t-s)\right)^{\frac{n}{2}},
\end{align*}
where $C_1=(-\frac{n}{2\min R_{g(0)}})^{-1}$.
Thus the upper bound for the integral heat kernel is obtained. 

We claim that $G(x,t;x_0,s)$ is bounded pointwise from below by $C_2(t-s)^{-\frac{n}{2}}$ for all $t\geq \frac{T+s}{2}$,
where $C_2$ depends on $c_1,c_2,n,T$ and $g(0)$. 
Therefore, combining with the upper bound above,
we have 
$$ \left(1+ C_1(t-s)\right)^{\frac{n}{2}} \geq \int_M G(x,t;x_0,s)\  d\mu_{g(t)}(x)  \geq C_2(t-s)^{-\frac{n}{2}} {\rm Vol}_{g(t)}$$
and thus $${\rm Vol}_{g(t)}\leq   C\left((t-s)+(t-s)^2\right)^{\frac{n}{2}}\leq C(t-s)^{\frac{n}{2}},$$
where the last $C$ depends only on $n,C_1,C_2$ and $T$.
This upper bound holds for all fixed $s$ and all $t\in[\frac{T+s}{2},T]$, so we may choose $t=\frac{T+s}{2}$ and 
derive the conclusion
$${\rm Vol}_{g(t)}\leq C(T-t)^{\frac{n}{2}} \mbox{ for all } t> T/2.$$

Now we complete the proof by verifying the claim that $G(x,t;x_0,s)\geq C_2(t-s)^{-\frac{n}{2}}$.
Note that, fixing $x$ and $t$, $v(y,s):=G(x,t;y,s)$ satisfies the backward conjugate heat equation 
$\partial_s v= -\Delta_y v + Rv$
and thus the function $f(y,s)$ defined by $(4\pi\tau)^{-\frac{n}{2}}e^{-f}=v$ satisfies 
$-f_s=\Delta f -|\nabla f|^2 +R-\frac{n}{2\tau}$, where $\tau=t-s$.
Combining with Perelman's estimate $ \tau (2\Delta f -|\nabla f|^2 +R) +f-n \leq 0$ (cf. \cite[Corollary 9.4]{Perelman02}),
one has $$-f_s \leq \frac{1}{2}R -\frac{1}{2}|\nabla f|^2 -\frac{1}{2\tau}f\leq \frac{1}{2}R -\frac{1}{2\tau}f,\ \mbox{ i.e., }\ -(\sqrt{\tau} f)_s  \leq \frac{1}{2}\sqrt{\tau}R.$$  
Integrating from $s$ to $t$, one has  
$$\sqrt{t-s}f(y,s) \leq   \lim_{\sigma\to t}\sqrt{t-\sigma}f(y,\sigma)  +\frac{1}{2}\int_s^t\sqrt{t-\sigma}R(y,\sigma)d\sigma.$$ 
Because $G(x,t;y,\sigma)$ behaves like $(t-\sigma)^{-\frac{n}{2}}$ as $\sigma\to t$ whenever $x=y$ (cf. \cite[Ch. 24]{ChowPart3}), 
$f(x,\sigma)$ is uniformly bounded as $\sigma\to t$, thus $\lim_{\sigma\to t}\sqrt{t-\sigma}f(x,\sigma)=0$
and $$\sqrt{t-s}f(x,s) \leq  \frac{1}{2}\int_s^t\sqrt{t-\sigma}R(x,\sigma)d\sigma.$$  

Since $x$ can be chosen arbitrarily, one may take $x=x_0$ in the beginning and obtain
$$-f(x_0,s) \geq -\frac{1}{2\sqrt{t-s}}\int_s^t\sqrt{t-\sigma}R(x_0,\sigma)d\sigma\geq -c_2.$$ 
So $$G(x_0,t;x_0,s) =v(x_0,s)  \geq(4\pi(t-s))^{-\frac{n}{2}}e^{-c_2}.$$
Moreover, by gradient estimate of heat equation along the Ricci flow (cf. \cite[(3.44)]{QZhang06}), 
one can compare $v(x,s)$ with $v(x_0,s)$, i.e.,
$$G(x,t;x_0,s)\geq C_3K^{-1} \exp\left(-2C_4\frac{{\rm dist}_{g(t)}^2(x,x_0)}{t-s}\right)(G(x_0,t;x_0,s))^2$$
for all $x\in M$, where $K$ is the upper bound of $G$ and $C_3,C_4$ are universal constants.
In \cite[(1.5)]{QZhang12}, it was proved that $K\leq C_5(t-s)^{-\frac{n}{2}}$. 
Therefore, using ${\rm diam}_{g(t)}\leq c_1\sqrt{T-t}$ and $t\geq\frac{T+s}{2}$, one has
\begin{align*}
G(x,t;x_0,s) \geq &\ C_3K^{-1} \exp\left(-2C_4\frac{{\rm dist}_{g(t)}^2(x,x_0)}{t-s}\right)(G(x_0,t;x_0,s))^2\\
\geq &\ C_3 (4\pi)^{-n}e^{-2c_2}C_5^{-1} \exp\left(-2C_4\frac{2c_1^2(T-t)}{T-s}\right)(t-s)^{-\frac{n}{2}}\\
\geq  &\ C_2(t-s)^{-\frac{n}{2}},\ \mbox{ where }C_2=(4\pi)^{-n}C_3C_5^{-1} e^{-2c_2-4C_4c_1^2}.
\end{align*}
The claim is verified for $C_2$ depending only on $c_1,c_2,n,T$ and $g(0)$. 
\end{proof}

\begin{Rmk}
Suppose that $-\hspace{-3.75mm} \int R(x,t) \ d{\rm vol}_{g(t)}  \geq \frac{n}{2}(T-t)^{-1}$ for all $t>\frac{T}{2}$.
Then 
$$\frac{d}{dt}{\rm Vol}_{g(t)} = -\int_M R\ d{\rm vol}_{g(t)} \leq  -  \frac{n}{2}(T-t)^{-1} {\rm Vol}_{g(t)}$$
implies that ${\rm Vol}_{g(t)} \leq C(T-t)^{\frac{n}{2}}$ for all $t>\frac{T}{2}$, 
where $C=\left(\frac{2}{T}\right)^{\frac{n}{2}}{\rm Vol}_{g(\frac{T}{2})}$.
This might help to remove the curvature assumption in Theorem \ref{UB}.
We remind the reader that, when $-\hspace{-3.75mm} \int R(x,t) \ d{\rm vol}_{g(t)}  \ngeq \frac{n}{2}(T-t)^{-1}$ for some $t_k\to T$, there must exist a sequence of points $(x_k,t_k)$ with Type I blow-up scalar curvature. However, this is insufficient for us to apply Theorem \ref{UB} because we need a fixed point $x_0$.
\end{Rmk}

\begin{Rmk}
For generic Ricci flows, it is not hard to see that volume grows at most polynomially.
Indeed, this is trivial when $R_{g(0)}\geq0$. For $\min R_{g(0)}<0$, by using $\frac{d}{dt} R \geq \Delta R+ \frac{2}{n}R^2$, one can show that $R_{g(t)}\geq \left( \frac{1}{\min R_{g(0)}}-\frac{2t}{n} \right)^{-1}$ and thus 
$\frac{d}{dt} \ln{\rm Vol}_{g(t)}=- -\hspace{-4mm}\int_M R\ d{\rm vol}_{g(t)} \leq  \left( \frac{2t}{n} -\frac{1}{\min R_{g(0)}}\right)^{-1}$.
Hence $${\rm Vol}_{g(t)}\leq {\rm Vol}_{g(0)} \left(-\min R_{g(0)}\right)^{\frac{n}{2}} 
\left( \frac{2t}{n} -\frac{1}{\min R_{g(0)}} \right)^{\frac{n}{2}}$$ for all
$t< T$.
\end{Rmk}


\end{document}